\newtheorem{thr}{Theorem}
\newtheorem{claim}[thr]{Claim}
\theoremstyle{definition}
\theoremstyle{remark}
\journal{arxiv.org}
\begin{document}

\begin{frontmatter}

\title{Linear mappings preserving the copositive cone}

\author{Yaroslav Shitov}

\ead{yaroslav-shitov@yandex.ru}


\begin{abstract}
Let $\mathcal{S}_n$ be the set of all $n$-by-$n$ symmetric real matrices, and let $\mathcal{C}_n$ be the \textit{copositive} cone, that is, the set of all matrices $a\in\mathcal{S}_n$ that fulfill the condition $u^\top a u\geqslant0$ for all $n$-vectors $u$ with nonnegative entries. We prove that a linear mapping $\varphi:\mathcal{S}_n\to \mathcal{S}_n$ satisfies $\varphi(\mathcal{C}_n)=\mathcal{C}_n$ if and only if
$$\varphi(x)=m^\top xm$$
for a fixed monomial matrix $m$ with nonnegative entries.
\end{abstract}

\begin{keyword}
linear preserver, copositive matrices
\MSC[2010] 15A86, 15B48
\end{keyword}
\end{frontmatter}

\smallskip

Our paper is a contribution to the study of \textit{linear preservers}, which aims to describe linear operators on matrix algebras with respect to either a given property of matrices or according to a function defined on them. As suggested in modern literature~\cite{LT}, this study dates back to a 1897 paper~\cite{Frob} by Frobenius, who described linear operators $\varphi$ on $n\times n$ complex matrices that satisfy $\det \varphi(x)=\det x$ for all $x$. This study remains attractive for many researchers in linear algebra, as shown by the work discussed in the surveys~\cite{GLS, LP, LT} and their citation counts. This paper continues the investigation of~\cite{FJZ} and applies this study to another well known concept of linear algebra---we work with preserver problems on the \textit{copositive} cone, an object that is being studied since at least as early as the 60's~\cite{MM}. Besides from being an attractive object to study in its own right~\cite{BDS}, the copositive cone is relevant in different contexts of optimization~\cite{DG2} and combinatorics~\cite{HallBook}. 

\section{Our result}

In this section and the rest of our note, we deal with real matrices and vectors. They are called \textit{nonnegative} if all numbers they are formed of are nonnegative. The vectors are assumed to be written as columns, and we write $x\geqslant 0$ if a vector $x$ is nonnegative. An $n\times n$ matrix is called \textit{monomial} if it has exactly one non-zero entry in every row and in every column. As said in the abstract, the notation $\mathcal{S}_n$ stands for the set of all $n$-by-$n$ symmetric real matrices, and $\mathcal{C}_n$ is the set of \textit{copositive} matrices, which are those elements $a\in\mathcal{S}_n$ that satisfy the inequality $u^\top a u\geqslant0$ for all nonnegative $n$-vectors $u$. A linear operator defined on $\mathcal{S}_n$ as $x\to m^\top xm$, for a fixed nonnegative monomial matrix $m$, is called a \textit{monomial congruence}.

\begin{thr}\label{thrmcopos}
A linear mapping $\varphi:\mathcal{S}_n\to \mathcal{S}_n$ satisfies $\varphi(\mathcal{C}_n)=\mathcal{C}_n$ if and only if $\varphi$ is a monomial congruence.
\end{thr}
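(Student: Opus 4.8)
The plan is to prove the two implications separately, with the forward (``if'') direction being routine and the converse carrying all the difficulty. For the ``if'' direction, suppose $\varphi(x)=m^\top x m$ with $m$ a nonnegative monomial matrix. For any $a\in\mathcal{C}_n$ and any $u\geqslant 0$ we have $u^\top(m^\top a m)u=(mu)^\top a(mu)\geqslant 0$, because $mu\geqslant 0$ and $a$ is copositive; hence $\varphi(\mathcal{C}_n)\subseteq\mathcal{C}_n$. Since the inverse of a nonnegative monomial matrix is again nonnegative monomial, the same computation applied to $m^{-1}$ gives the reverse inclusion, so $\varphi(\mathcal{C}_n)=\mathcal{C}_n$.

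For the converse I would first observe that $\varphi$ is automatically bijective: its image contains $\mathcal{C}_n$, which spans $\mathcal{S}_n$ since it has nonempty interior, so $\varphi$ is onto and hence invertible. Passing to the adjoint with respect to the trace inner product, the identity $\varphi(\mathcal{C}_n)=\mathcal{C}_n$ yields $\varphi^{*}(\mathcal{C}_n^{*})=\mathcal{C}_n^{*}$. It is classical that the dual cone is the completely positive cone, $\mathcal{C}_n^{*}=\{\sum_k v_kv_k^\top:v_k\geqslant 0\}$, whose extreme rays are exactly the rank-one matrices $vv^\top$ with $v\geqslant 0$. This is the decisive gain from dualizing: the copositive cone has an intractable set of extreme rays, whereas its dual does not. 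Writing $\Phi:=\varphi^{*}$, the bijective automorphism $\Phi$ permutes these extreme rays, so for every $v\geqslant 0$ the matrix $\Phi(vv^\top)$ is again a nonnegative rank-one matrix, say $\psi(v)\psi(v)^\top$.

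The heart of the argument is to upgrade this ray-wise information to the global statement that $\Phi$ is a congruence. I would consider the symmetric-matrix-valued map $F(v):=\Phi(vv^\top)$, whose entries are quadratic forms in $v$. Its $2\times 2$ minors are polynomials in $v$ vanishing on the Zariski-dense set $\{v\geqslant 0\}$, hence they vanish identically, and $F(v)$ has rank at most one for every real $v$. Restricting to a line $v=u+ts$ and using linearity of $\Phi$, the pencil $F(u+ts)=\Phi(uu^\top)+t\,\Phi(us^\top+su^\top)+t^2\Phi(ss^\top)$ has rank at most one for all $t$, with nonnegative rank-one endpoints $\psi(u)\psi(u)^\top$ and $\psi(s)\psi(s)^\top$. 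Because $\Phi$ is bijective, for independent $u,s$ the vectors $\psi(u),\psi(s)$ are independent (this excludes the degenerate rank-one pencils $q(v)\,rr^\top$ coming from non-injective maps), and a rank-one symmetric pencil with independent endpoint directions is forced to equal $(\psi(u)+t\psi(s))(\psi(u)+t\psi(s))^\top$. In particular $\psi$ is additive, so $\psi(v)=Cv$ for a fixed matrix $C$; thus $\Phi(vv^\top)=(Cv)(Cv)^\top=C(vv^\top)C^\top$, and since such matrices span $\mathcal{S}_n$ we obtain $\Phi(x)=CxC^\top$ for all $x$.

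It remains to identify $C$ and undo the adjoint. Since $\Phi$ maps $\mathcal{C}_n^{*}$ onto itself, $(Cv)(Cv)^\top$ is completely positive for every $v\geqslant 0$, which means $Cv$ lies in the nonnegative orthant or its negative; a genericity argument rules out mixed signs within $C$, so after replacing $C$ by $-C$ if necessary, $C$ carries the orthant into itself, and applying the same to $\Phi^{-1}$ makes this a bijection of the orthant. Hence $C$ is a nonnegative monomial matrix, and undoing the adjoint gives $\varphi(y)=C^\top y C=m^\top y m$ with $m=C$, a monomial congruence. I expect the rank-one pencil step to be the main obstacle: proving that $F$ is identically of rank at most one is clean, but extracting from it the linear dependence of $\psi$ — equivalently, showing that a linear automorphism of the completely positive cone must be a congruence — is where the bijectivity and the sign bookkeeping must be handled carefully and where a naive argument breaks down.
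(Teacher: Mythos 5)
Your proposal is correct in its essentials, but it takes a genuinely different route from the paper's proof. The paper never dualizes: it works directly with the copositive cone, first showing $\varphi$ is bijective and preserves $\partial\mathcal{C}_n$, then studying the sets $\mathcal{A}^t$ of symmetric matrices that vanish at the $(t,t)$ entry and are positive elsewhere. Using two classical facts about copositive matrices (a boundary matrix has a nonzero nonnegative zero of its quadratic form, and a zero with all coordinates positive is annihilated by the matrix), it shows that all matrices in $\varphi(\mathcal{A}^t)$ share a common zero supported on a single coordinate; this yields a permutation, reduces $\varphi$ by a monomial congruence to a map $\psi$ with $\psi(e_{tt})=e_{tt}+b_t$, and a final round of copositivity bookkeeping forces $\psi$ to be the identity. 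You instead pass to the adjoint, invoke the classical duality identifying $\mathcal{C}_n^{*}$ with the completely positive cone, and exploit that its extreme rays are exactly the matrices $vv^\top$ with $v\geqslant 0$ --- precisely the structure the copositive cone itself lacks; a Zariski-density and rank-one-pencil argument then shows $\varphi^{*}(x)=CxC^\top$, and orthant preservation for both $C$ and $C^{-1}$ makes $C$ monomial. Each approach buys something: the paper's is elementary and self-contained, needing no duality theory; yours is more conceptual, plugs into the general theory of cone automorphisms and rank-one preservers, and proves along the way that the linear automorphisms of the completely positive cone are exactly the monomial congruences, from which the copositive statement follows by duality. Two points in your plan need care, both fixable: (i) the pencil argument as such only yields $\Phi(us^\top+su^\top)=\pm\bigl(\psi(u)\psi(s)^\top+\psi(s)\psi(u)^\top\bigr)$, and the minus sign must be excluded --- normalize $\psi(v)\geqslant 0$ and use that $\Phi\bigl((u+ts)(u+ts)^\top\bigr)$ must be entrywise nonnegative for all $t\geqslant 0$, which fails for the minus sign when $\psi(u),\psi(s)$ are independent nonnegative vectors; (ii) the identification of $\mathcal{C}_n^{*}$ and of the extreme rays of the completely positive cone are classical facts that should be cited or proved, whereas the paper's two auxiliary facts come with short proofs.
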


This result was conjectured in a recent paper~\cite{FJZ} of Furtado, Johnson, Zhang, who checked its validity with $n=2$, and also for mappings of the form $x\to s^\top xs$ with a fixed, not necessarily monomial, matrix $s$. The 'if' part of Theorem~\ref{thrmcopos} is trivial, so we concentrate on the 'only if' direction.

\section{Auxiliary results}

In the rest of our paper, the letter $\varphi$ stands for a mapping as in Theorem~\ref{thrmcopos}, that is, a linear operator on $\mathcal{S}_n$ that satisfies $\varphi(\mathcal{C}_n)=\mathcal{C}_n$. We work with the standard Euclidean topology on the space of all $n\times n$ real matrices, and we write $\partial\mathcal{C}_n$ to denote the boundary of $\mathcal{C}_n$ in $\mathcal{S}_n$. We list several known propositions and give brief sketches of their proofs for completeness.

\begin{claim}\label{cl1copos}{\upshape (See~\cite[Theorem~3.1]{FJZ}.)}
The mapping $\varphi$ is bijective on $\mathcal{S}_n$, and its inverse $\varphi^{-1}$ satisfies $\varphi^{-1}(\mathcal{C}_n)=\mathcal{C}_n$.
\end{claim}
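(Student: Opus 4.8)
The plan is to derive both conclusions from one elementary fact: the cone $\mathcal{C}_n$ is full-dimensional, i.e.\ its linear span is all of $\mathcal{S}_n$. To establish this I would observe that $\mathcal{C}_n$ contains the positive semidefinite cone (the defining inequality $u^\top a u\geqslant0$ is required only for nonnegative $u$, so copositivity is weaker than positive semidefiniteness), and the positive semidefinite matrices already span $\mathcal{S}_n$ because they include every rank-one matrix $vv^\top$. A slicker route is to check that the identity matrix lies in the \emph{interior} of $\mathcal{C}_n$: on the compact set of nonnegative unit vectors the quantity $u^\top I u=\|u\|^2$ is bounded below by a positive constant, so every small symmetric perturbation of $I$ remains copositive, and a cone with nonempty interior spans the ambient space.

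With full-dimensionality in hand, bijectivity is quick. I would note that $\varphi(\mathcal{S}_n)$ is a linear subspace, and that the surjectivity half of the hypothesis $\varphi(\mathcal{C}_n)=\mathcal{C}_n$ gives the inclusion $\mathcal{C}_n\subseteq\varphi(\mathcal{S}_n)$. A subspace containing the full-dimensional set $\mathcal{C}_n$ is necessarily all of $\mathcal{S}_n$, so $\varphi$ is onto; since $\mathcal{S}_n$ is finite-dimensional, the rank--nullity theorem then forces injectivity, and $\varphi$ is a bijection.

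The claim about the inverse is then purely formal: applying the bijection $\varphi^{-1}$ to both sides of the set equality $\varphi(\mathcal{C}_n)=\mathcal{C}_n$ yields $\mathcal{C}_n=\varphi^{-1}\bigl(\varphi(\mathcal{C}_n)\bigr)=\varphi^{-1}(\mathcal{C}_n)$, which is exactly the desired conclusion.

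I do not anticipate a genuine obstacle in this argument. The only place demanding mild attention is reading the inclusion $\mathcal{C}_n\subseteq\varphi(\mathcal{S}_n)$ off the \emph{onto} direction of the hypothesis rather than the forward inclusion alone, together with recording the (easy) full-dimensionality of the copositive cone. Everything else is a direct application of finite-dimensional linear algebra.
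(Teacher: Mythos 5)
Your proposal is correct and takes essentially the same route as the paper: the paper's own (sketched) proof likewise derives bijectivity from the fact that $\mathcal{C}_n$ spans $\mathcal{S}_n$, and obtains the statement about the inverse by applying $\varphi^{-1}$ to both sides of $\varphi(\mathcal{C}_n)=\mathcal{C}_n$. You simply fill in the details the paper leaves implicit, namely why the copositive cone is full-dimensional and how surjectivity plus rank--nullity yields bijectivity.
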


\begin{proof}
The first part follows because $\mathcal{C}_n$ spans $\mathcal{S}_n$ over $\mathbb{R}$, and the assertion with the inverse follows from $\varphi(\mathcal{C}_n)=\mathcal{C}_n$ by applying $\varphi^{-1}$ to both sides.
\end{proof}

\begin{claim}\label{cl2copos}{\upshape (See~\cite[Corollary~3.2]{FJZ}.)}
We have $\varphi(\partial\mathcal{C}_n)=\partial\mathcal{C}_n$.
\end{claim}

\begin{proof}
Follows because $\varphi$ is bijective and continuous.
\end{proof}


\begin{claim}\label{cl3copos}{\upshape (See~\cite[page 595, Remark]{HUS}.)}
If $a\in\partial\mathcal{C}_n$, then there is a non-zero vector $\xi\geqslant 0$ such that $\xi^\top a\xi=0$.
\end{claim}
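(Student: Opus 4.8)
The plan is to exploit compactness of the standard simplex together with the fact that $\mathcal{C}_n$ is a closed convex cone. First I would record that $\mathcal{C}_n$ is closed, being the intersection over all $u\geqslant 0$ of the closed half-spaces $\{a\in\mathcal{S}_n : u^\top a u\geqslant 0\}$; consequently $\partial\mathcal{C}_n\subseteq\mathcal{C}_n$, so the matrix $a$ in the statement is itself copositive. This reduces the task to producing a nonzero nonnegative $\xi$ at which the quadratic form vanishes.

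Next I would introduce the continuous function $f(u)=u^\top a u$ restricted to the compact set $\Delta=\{u\geqslant 0 : \sum_i u_i=1\}$, and let $\mu=\min_{u\in\Delta}f(u)$, attained at some $\xi\in\Delta$. Since $a$ is copositive we have $\mu\geqslant 0$, and by homogeneity of $f$ every nonzero $u\geqslant 0$ satisfies $u^\top a u\geqslant\mu\,(\sum_i u_i)^2$. The heart of the argument is to show $\mu=0$. Suppose instead $\mu>0$. Then for any symmetric perturbation $h$ and any $u\in\Delta$ we have $u^\top(a+h)u\geqslant\mu+u^\top h u$, and since $\|u\|_2\leqslant 1$ on $\Delta$ the term $|u^\top h u|$ is bounded by a constant multiple of the norm of $h$. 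Choosing $h$ small enough forces $u^\top(a+h)u>0$ for all $u\in\Delta$, hence for every nonzero $u\geqslant 0$ after rescaling. This would place an entire neighborhood of $a$ inside $\mathcal{C}_n$, contradicting $a\in\partial\mathcal{C}_n$.

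Therefore $\mu=0$, and the minimizer $\xi\in\Delta$ is exactly the vector we want: it is nonnegative, nonzero since $\sum_i\xi_i=1$, and satisfies $\xi^\top a\xi=\mu=0$.

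The only genuinely delicate point is the perturbation estimate showing that a strictly positive minimum over $\Delta$ forces $a$ into the interior of $\mathcal{C}_n$; once the uniform lower bound $\mu$ and the norm bound on $|u^\top h u|$ are in hand, the rest is routine bookkeeping with homogeneity and compactness.
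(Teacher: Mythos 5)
Your proposal is correct and follows essentially the same route as the paper: the paper's (one-line) proof is precisely to take a minimizer of $x^\top a x$ over the simplex $\{x\geqslant 0,\ \sum_i x_i=1\}$, and your argument fills in the details that the paper leaves implicit, namely that $a$ is itself copositive (closedness of $\mathcal{C}_n$) and that a strictly positive minimum would place $a$ in the interior of $\mathcal{C}_n$ via the perturbation bound on $|u^\top h u|$.
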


\begin{proof}
Pick a vector that minimizes the value $x^\top ax$ over the compact set of all vectors $x=(x_1,\ldots,x_n)$ such that $x\geqslant 0$ and $x_1+\ldots+x_n=1$.
\end{proof}


\begin{claim}\label{cl4copos}{\upshape (See~\cite[Theorem~3.2]{Val}.)}
If $a\in\mathcal{C}_n$ and $\xi^\top a\xi=0$ for an $n$-vector $\xi$ with all coordinates strictly positive, then $a\xi$ is a zero vector.
\end{claim}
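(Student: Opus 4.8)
The plan is to exploit the fact that strict positivity of $\xi$ places it in the \emph{interior} of the nonnegative orthant, so that the optimality condition for the quadratic form $f(x)=x^\top ax$ at $\xi$ becomes a two-sided one. First I would record the observation that makes this work: since $a$ is copositive, $f$ is nonnegative on the entire set of nonnegative vectors, while $f(\xi)=0$ by hypothesis. Hence $\xi$ is a global minimizer of $f$ over the nonnegative orthant, and—because all its coordinates are strictly positive—a \emph{local} minimizer of $f$ on an open neighborhood in $\mathbb{R}^n$ as well.

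Next I would carry out a perturbation argument. Fix an arbitrary direction $v\in\mathbb{R}^n$. As every coordinate of $\xi$ is strictly positive, the vector $\xi+tv$ stays nonnegative (in fact strictly positive) for all sufficiently small $|t|$, and therefore $f(\xi+tv)\geqslant 0$ for such $t$. Expanding and using $a=a^\top$ together with $f(\xi)=0$ gives
$$ f(\xi+tv)=f(\xi)+2t\,v^\top a\xi+t^2\,v^\top av=2t\,(v^\top a\xi)+t^2\,(v^\top av). $$
Since this expression is nonnegative for all small $t$ of \emph{either} sign, the coefficient of the linear term must vanish, that is, $v^\top a\xi=0$.

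Finally, as $v$ was arbitrary, the vector $a\xi$ is orthogonal to every vector of $\mathbb{R}^n$ and is therefore zero, which is the desired conclusion. There is no real obstacle here, but the point deserving attention is that the strict positivity of $\xi$ is exactly what licenses perturbing in both directions while remaining feasible; at a boundary point of the orthant one would obtain only the one-sided inequality $v^\top a\xi\geqslant 0$ for admissible $v$, which is too weak to force $a\xi=0$. Thus the hypothesis that $\xi$ has all coordinates strictly positive is used in an essential way.
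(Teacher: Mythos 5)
Your proof is correct and takes essentially the same approach as the paper: both arguments use the fact that strict positivity of $\xi$ makes it an interior local minimizer of the quadratic form $x\mapsto x^\top ax$, so the first-order optimality condition forces the gradient $2a\xi$ to vanish. Your expansion $f(\xi+tv)=2t\,(v^\top a\xi)+t^2\,(v^\top av)$ simply spells out that gradient argument explicitly rather than citing it.
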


\begin{proof}
Since any vector $x$ that lies sufficiently close to $\xi$ is nonnegative, it has to satisfy $x^\top a x\geqslant 0$. This means that the quadratic form $A(x)=x^\top a x$ attains its local minimum at $\xi$, which shows that the gradient $\partial A/\partial x$ should be zero at $\xi$. It remains to note that this gradient equals $2ax$.
\end{proof}

\section{The proof of Theorem~\ref{thrmcopos}}

For a fixed index $t\in\{1,\ldots, n\}$, we define $\mathcal{A}^t$ as the set of all matrices in $\mathcal{S}_n$ with zeros at the $(t,t)$ position and positive numbers everywhere else. 

\begin{claim}\label{cl5copos}
We have $\mathcal{A}^t\subset\partial\mathcal{C}_n$ and  $\varphi(\mathcal{A}^t)\subset\partial\mathcal{C}_n$.
\end{claim}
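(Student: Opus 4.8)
The plan is to establish the inclusion $\mathcal{A}^t\subset\partial\mathcal{C}_n$ directly and then obtain $\varphi(\mathcal{A}^t)\subset\partial\mathcal{C}_n$ as a free consequence of Claim~\ref{cl2copos}.

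For the first inclusion I would begin by checking that every $a\in\mathcal{A}^t$ is copositive. Writing the quadratic form as $u^\top au=\sum_{i,j}a_{ij}u_iu_j$ for a nonnegative vector $u$, I note that the $(t,t)$ term vanishes because $a_{tt}=0$, while every remaining term is a product of a positive entry $a_{ij}$ and a nonnegative factor $u_iu_j$. Hence $u^\top au\geqslant0$ for all $u\geqslant0$, so $a\in\mathcal{C}_n$.

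It then remains to argue that $a$ sits on the boundary rather than in the interior. Here the key observation is that $e_t^\top ae_t=a_{tt}=0$, where $e_t$ is the $t$-th standard basis vector, so $a$ fails to be strictly copositive. To turn this into membership in $\partial\mathcal{C}_n$, I would exhibit non-copositive matrices arbitrarily close to $a$: for small $\varepsilon>0$ the perturbation $a-\varepsilon\,e_te_t^\top$ has $(t,t)$ entry equal to $-\varepsilon$, whence $e_t^\top(a-\varepsilon\,e_te_t^\top)e_t=-\varepsilon<0$ and the perturbed matrix is not copositive. Since $a\in\mathcal{C}_n$ while every neighbourhood of $a$ meets the complement of $\mathcal{C}_n$, we conclude that $a\in\partial\mathcal{C}_n$.

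Finally, the inclusion $\varphi(\mathcal{A}^t)\subset\partial\mathcal{C}_n$ is immediate: having just shown $\mathcal{A}^t\subset\partial\mathcal{C}_n$, I apply Claim~\ref{cl2copos} to obtain $\varphi(\mathcal{A}^t)\subset\varphi(\partial\mathcal{C}_n)=\partial\mathcal{C}_n$. I do not anticipate any genuine obstacle in this statement; the only point requiring a little care is the standard fact that a copositive matrix whose quadratic form vanishes on some nonzero vector of the nonnegative orthant cannot be interior, which the explicit perturbation above settles.
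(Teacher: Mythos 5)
Your proposal is correct and follows essentially the same route as the paper: membership in $\mathcal{C}_n$ from nonnegativity of the entries, boundary membership via the fact that copositive matrices must have nonnegative diagonal (your explicit perturbation $a-\varepsilon\,e_te_t^\top$ is exactly this fact in action), and then Claim~\ref{cl2copos} for the image. The paper compresses all of this into one line, but the content is identical.
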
 

\begin{proof}
The first inclusion follows because copositive matrices have nonnegative diagonal, and the second one is guaranteed by Claim~\ref{cl2copos}.
\end{proof}

\begin{claim}\label{cl6copos}
The image $\varphi(\mathcal{A}^t)$ spans a codimension-one subspace of $\mathcal{S}_n$.
\end{claim}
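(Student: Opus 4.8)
The plan is to show that $\mathcal{A}^t$ already spans a codimension-one subspace of $\mathcal{S}_n$, and then to push this through the linear bijection $\varphi$ supplied by Claim~\ref{cl1copos}. First I would single out the hyperplane $H^t=\{a\in\mathcal{S}_n:a_{tt}=0\}$, which is cut out by the single linear equation $a_{tt}=0$ and hence has codimension one. Every element of $\mathcal{A}^t$ satisfies $a_{tt}=0$, so $\mathcal{A}^t\subseteq H^t$ and therefore $\operatorname{span}(\mathcal{A}^t)\subseteq H^t$.

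For the reverse inclusion I would use that $\mathcal{A}^t$ is a \emph{nonempty, relatively open} subset of the linear space $H^t$: within $H^t$ it is defined by the finitely many strict inequalities $a_{ij}>0$ for $(i,j)\neq(t,t)$, and the matrix having $1$ in every off-$(t,t)$ position and $0$ at $(t,t)$ witnesses that it is nonempty. Fixing such a point $p\in\mathcal{A}^t$, a whole relative ball of $H^t$ around $p$ lies in $\mathcal{A}^t$; so for every direction $v\in H^t$ the points $p$ and $p+\delta v$ belong to $\mathcal{A}^t$ for all small $\delta>0$, whence $v=\delta^{-1}\bigl((p+\delta v)-p\bigr)\in\operatorname{span}(\mathcal{A}^t)$. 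Letting $v$ range over a basis of $H^t$ yields $H^t\subseteq\operatorname{span}(\mathcal{A}^t)$, so $\operatorname{span}(\mathcal{A}^t)=H^t$.

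It then remains to invoke linearity together with the bijectivity from Claim~\ref{cl1copos}. Because $\varphi$ is linear, $\operatorname{span}(\varphi(\mathcal{A}^t))=\varphi(\operatorname{span}(\mathcal{A}^t))=\varphi(H^t)$, and because $\varphi$ is invertible it preserves dimension, so $\dim\varphi(H^t)=\dim H^t=\dim\mathcal{S}_n-1$. Hence $\varphi(\mathcal{A}^t)$ spans the codimension-one subspace $\varphi(H^t)$.

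I do not expect a genuine obstacle here. The only step needing a little care is the elementary topological fact that a nonempty relatively open subset of a linear subspace spans the entire subspace; this is what upgrades the set inclusion $\mathcal{A}^t\subseteq H^t$ to the equality of spans. The rest is routine bookkeeping about invertible linear maps.
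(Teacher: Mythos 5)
Your proof is correct and is exactly the argument the paper compresses into its one-line justification ``follows because $\varphi$ is bijective'': the intended reasoning is precisely that $\mathcal{A}^t$ is a nonempty relatively open subset of the hyperplane $\{a\in\mathcal{S}_n : a_{tt}=0\}$, hence spans it, and a bijective linear map carries a codimension-one span to a codimension-one span. You have simply supplied the details the paper leaves implicit.
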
 

\begin{proof}
Follows because $\varphi$ is bijective.
\end{proof}

In what follows, we associate any copositive matrix $b$ with the kernel $\ker b=\{x\geqslant 0: x^\top b x=0\}$ of the corresponding quadratic form.

\begin{claim}\label{cl7copos}
We have $\ker b=\ker c$ for all $b,c\in\varphi(\mathcal{A}^t)$.
\end{claim}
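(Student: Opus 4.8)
The plan is to exploit the fact that $\mathcal{A}^t$ is a \emph{relatively open, convex} subset of the hyperplane $\{a\in\mathcal{S}_n:a_{tt}=0\}$, so that its image $\varphi(\mathcal{A}^t)$ is a relatively open convex subset of the codimension-one subspace of Claim~\ref{cl6copos}. Both properties of $\mathcal{A}^t$ read off its definition: the condition $a_{tt}=0$ is linear and the conditions $a_{ij}>0$ are open within that hyperplane, so $\mathcal{A}^t$ is convex and relatively open; since a linear bijection is a homeomorphism, these properties pass to $\varphi(\mathcal{A}^t)$. I also record that $\varphi(\mathcal{A}^t)\subseteq\partial\mathcal{C}_n\subseteq\mathcal{C}_n$ by Claim~\ref{cl5copos} and the closedness of the copositive cone, so every matrix occurring below is copositive.

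The key observation concerns positive convex combinations of copositive matrices. If $p=\lambda q+(1-\lambda)r$ with $q,r\in\mathcal{C}_n$ and $\lambda\in(0,1)$, then for any $\xi\geqslant 0$ the value $\xi^\top p\,\xi=\lambda\,\xi^\top q\,\xi+(1-\lambda)\,\xi^\top r\,\xi$ is a sum of two nonnegative terms, so $\xi^\top p\,\xi=0$ forces $\xi^\top q\,\xi=\xi^\top r\,\xi=0$. In other words $\ker p=\ker q\cap\ker r$, and in particular $\ker p\subseteq\ker q$.

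Now fix $b,c\in\varphi(\mathcal{A}^t)$, where I may assume $b\neq c$. By relative openness, the line through $b$ and $c$ meets the convex set $\varphi(\mathcal{A}^t)$ in a relatively open segment that contains both $b$ and $c$ as interior points. Hence I can choose a third point $b'\in\varphi(\mathcal{A}^t)$ with $b$ strictly between $b'$ and $c$; the key observation then gives $\ker b\subseteq\ker c$. Choosing symmetrically a point $c'\in\varphi(\mathcal{A}^t)$ with $c$ strictly between $b$ and $c'$ yields the reverse inclusion $\ker c\subseteq\ker b$. Combining the two proves $\ker b=\ker c$.

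I expect the only delicate point to be this extension step: one must be certain that each of $b$ and $c$ is an \emph{interior} point of the segment cut out by their joining line, so that it genuinely arises as a convex combination with both coefficients positive. This is exactly what relative openness provides, and it is the reason for carrying out the argument inside the hyperplane rather than in all of $\mathcal{S}_n$; the remaining steps are immediate consequences of copositivity.
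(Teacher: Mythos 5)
Your proof is correct and is essentially the paper's argument: both hinge on writing $b$ as a positive combination of $c$ and a third copositive matrix coming from $\varphi(\mathcal{A}^t)$, then using that a zero of the combined quadratic form must annihilate each nonnegative summand. The only cosmetic difference is that the paper produces the third matrix in the preimage, via the explicit decomposition $\varphi^{-1}(b)=\varepsilon\varphi^{-1}(c)+a$ with $a\in\mathcal{A}^t$ for small $\varepsilon>0$, whereas you transport the openness and convexity of $\mathcal{A}^t$ to the image and extend the segment beyond $b$ there.
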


\begin{proof}
Since $\varphi^{-1}(b),\varphi^{-1}(c)$ belong to $\mathcal{A}^t$, we can find $a\in\mathcal{A}^t$ and $\varepsilon>0$ satisfying $\varphi^{-1}(b)=\varepsilon \varphi^{-1}(c)+a$, which shows that $b=\varepsilon c+\varphi(a)$. Since $\varphi(a)$ is copositive, the latter equation shows that $\ker b\subseteq\ker c$.
\end{proof}

\begin{claim}\label{cl8copos}
There is a nonnegative vector $u^t$ with exactly one non-zero coordinate such that $\ker b=\{\lambda u^t|\lambda\in\mathbb{R},\lambda\geqslant 0\}$ for all $b\in\varphi(\mathcal{A}^t)$.
\end{claim}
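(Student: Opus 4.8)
The plan is to first identify the common kernel, then pin down its shape in two stages: show it is a single ray, and then show that ray is spanned by a multiple of a standard basis vector. By Claim~\ref{cl7copos} the set $K:=\ker b$ does not depend on the choice of $b\in\varphi(\mathcal{A}^t)$, so it is a well-defined object attached to $\varphi(\mathcal{A}^t)$. Since $\varphi(\mathcal{A}^t)\subset\partial\mathcal{C}_n$ by Claim~\ref{cl5copos}, Claim~\ref{cl3copos} guarantees that $K$ contains a non-zero vector; I fix one $b\in\varphi(\mathcal{A}^t)$ and a non-zero $v\in K$ to start. The target is to prove $K=\{\lambda u^t:\lambda\geqslant 0\}$ for a vector $u^t\geqslant 0$ with a single non-zero coordinate.

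For the ray structure, I would exploit that $\mathcal{A}^t$ spans the hyperplane $\mathcal{H}=\{a\in\mathcal{S}_n:a_{tt}=0\}$, so that $\varphi(\mathcal{A}^t)$ spans the codimension-one subspace $\varphi(\mathcal{H})$ (Claim~\ref{cl6copos}). For any non-zero $w\in K$ the linear functional $Q_w:b\mapsto w^\top b\,w$ vanishes on $\varphi(\mathcal{A}^t)$, hence by linearity on all of $\varphi(\mathcal{H})$; as $Q_w$ is a non-zero functional on $\mathcal{S}_n$, its kernel is a hyperplane, which forces $\ker Q_w=\varphi(\mathcal{H})$. Applying this to two non-zero $w,w'\in K$ gives $\ker Q_w=\ker Q_{w'}$, so $Q_w$ and $Q_{w'}$ are proportional; writing $Q_w(\cdot)=\langle ww^\top,\cdot\rangle$, this means $ww^\top$ and $w'(w')^\top$ are proportional, which for nonnegative $w,w'$ is possible only if they are positive multiples of one another. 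Thus every non-zero element of $K$ is a positive multiple of a single $u^t\geqslant 0$, giving the ray $K=\{\lambda u^t:\lambda\geqslant 0\}$.

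It then remains to show that $u^t$ has exactly one non-zero coordinate, and this is the main step. Let $S$ be the support of $u^t$ and write $u_S$ for its restriction to the coordinates in $S$; the goal is $|S|=1$. For any $b\in\varphi(\mathcal{A}^t)$ the principal submatrix $b_S$ is copositive, the vector $u_S$ is strictly positive, and $u_S^\top b_S u_S=(u^t)^\top b\,u^t=0$, so Claim~\ref{cl4copos}, applied in dimension $|S|$, yields $b_S u_S=0$, i.e.\ $(b\,u^t)_i=0$ for every $i\in S$ and every $b\in\varphi(\mathcal{A}^t)$. Passing to spans, these $|S|$ conditions hold on all of $\varphi(\mathcal{H})$, so $\varphi(\mathcal{H})\subseteq W:=\{b\in\mathcal{S}_n:(b\,u^t)_i=0\text{ for all }i\in S\}$. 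A quick check---the coefficient of $b_{ii}$ in the functional $b\mapsto(b\,u^t)_i$ is $u^t_i>0$ for $i\in S$---shows these $|S|$ functionals are linearly independent, so $\operatorname{codim}W=|S|$.

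The contradiction now comes from dimension counting: $\varphi(\mathcal{H})$ has codimension one, and a codimension-one subspace cannot be contained in a subspace of codimension $|S|$ unless $|S|\leqslant 1$. Since $u^t\neq 0$ we get $|S|=1$, which is exactly the assertion. The crux of the argument is the passage to the principal submatrix on the support so that Claim~\ref{cl4copos} becomes applicable to a strictly positive vector, combined with the observation that the resulting linear conditions, being as numerous as $|S|$ and independent, are too restrictive to be satisfied by a codimension-one image unless the support collapses to a single coordinate.
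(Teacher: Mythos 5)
Your proof is correct and follows essentially the same route as the paper: restrict $b$ to the support $S$ of a kernel vector, apply Claim~\ref{cl4copos} to the (strictly positive) restricted vector to obtain $|S|$ independent linear conditions valid on the span of $\varphi(\mathcal{A}^t)$, and conclude $|S|\leqslant 1$ from the codimension-one bound of Claim~\ref{cl6copos}. The only cosmetic difference is your handling of uniqueness of the ray, via proportionality of the rank-one functionals $b\mapsto w^\top b\,w=\langle ww^\top,b\rangle$; the paper disposes of this point with the same codimension-one argument (two non-collinear kernel vectors would produce two independent linear conditions, again contradicting Claim~\ref{cl6copos}).
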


\begin{proof}
We find a non-zero vector $u\in \ker b$ by Claim~\ref{cl3copos}, and we define $\sigma$ as the set of all non-zero indexes of $u$. Since the principal submatrix $b(\sigma|\sigma)$ is still copositive, we can apply Claim~\ref{cl4copos} and get $b(\sigma|\sigma)u'=0$, where $u'$ is the vector obtained from $u$ by removing its zero coordinates. According to Claim~\ref{cl7copos}, the vector $u$ belongs to the kernel of any $c\in\varphi(\mathcal{A}^t)$, so the condition $c(\sigma|\sigma)u'=0$ is satisfied by all such $c$. Since this condition is a union of $|\sigma|$ independent linear equations, we get $|\sigma|\leqslant 1$ from Claim~\ref{cl6copos}. The existence of two non-collinear vectors in $\ker b$ contradicts to Claim~\ref{cl6copos} as well.
\end{proof}

In what follows, we write $e_{ij}$ to denote the $(i,j)$ matrix unit. We define $\mathcal{T}_n$ as the set of nonnegative matrices with zeros at the diagonal, which are exactly the set of trace-zero copositive matrices.

\begin{claim}\label{cl9copos}
There is a monomial congruence $\mu$ such that the mapping $\psi=\varphi\circ\mu$ satisfies $\psi(e_{tt})=e_{tt}+b_t$ with $b_t\in\mathcal{T}_n$, for all indexes $t$.
\end{claim}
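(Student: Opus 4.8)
The plan is to determine how $\varphi$ acts on the diagonal coordinates and then correct the resulting permutation by a monomial congruence. First I would set $s(t)$ to be the index of the unique non-zero coordinate of the vector $u^t$ provided by Claim~\ref{cl8copos}. Since every $b\in\varphi(\mathcal{A}^t)$ has $u^t$ in its kernel, the equality $0=(u^t)^\top b\,u^t$ would force $b_{s(t)s(t)}=0$. As $\mathcal{A}^t$ is an open subset of the hyperplane $\{a\in\mathcal{S}_n:a_{tt}=0\}$, it spans that hyperplane, so $\varphi(\mathcal{A}^t)$ spans the image $\varphi(\{a_{tt}=0\})$; by Claim~\ref{cl6copos} this span has codimension one, and being contained in the codimension-one space $\{b:b_{s(t)s(t)}=0\}$ it must coincide with it. The functionals $a\mapsto a_{tt}$ and $a\mapsto(\varphi(a))_{s(t)s(t)}$ would then share a kernel, yielding a non-zero scalar $\gamma_t$ with
\[(\varphi(a))_{s(t)s(t)}=\gamma_t\,a_{tt}\qquad\text{for every }a\in\mathcal{S}_n.\]

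Specializing this identity to $a=e_{jj}$ shows that $(\varphi(e_{jj}))_{s(t)s(t)}$ equals $\gamma_t$ when $t=j$ and vanishes otherwise. From here I would extract three facts. The map $s$ is injective, hence a permutation, because $s(t_1)=s(t_2)$ with $t_1\neq t_2$ would force $\gamma_{t_1}a_{t_1t_1}=\gamma_{t_2}a_{t_2t_2}$ for all $a$, which is impossible for non-zero $\gamma$'s. Each $\gamma_j$ is positive, since $\varphi(e_{jj})$ is copositive and therefore has a nonnegative diagonal while $\gamma_j\neq 0$. And as $t$ runs over all indices the positions $s(t)$ exhaust $\{1,\dots,n\}$, so the diagonal of $\varphi(e_{jj})$ is zero except at the position $s(j)$, where it equals $\gamma_j$.

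Finally I would build the nonnegative monomial matrix $m$ whose row $t$ carries its single non-zero entry $\gamma_{s^{-1}(t)}^{-1/2}$ in column $s^{-1}(t)$; this is legitimate because $s^{-1}$ is a permutation and each $\gamma$ is positive. Setting $\mu(x)=m^\top xm$ and $\psi=\varphi\circ\mu$, a short computation gives $\mu(e_{tt})=\gamma_{s^{-1}(t)}^{-1}e_{s^{-1}(t)s^{-1}(t)}$, so by the second paragraph the diagonal of $\psi(e_{tt})$ is concentrated at position $s(s^{-1}(t))=t$ with value $\gamma_{s^{-1}(t)}^{-1}\gamma_{s^{-1}(t)}=1$. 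Thus $\psi(e_{tt})$ is copositive with diagonal exactly $e_{tt}$, and inspecting its $2\times2$ principal submatrices on index pairs, where copositivity together with the vanishing diagonal entries forces each off-diagonal entry to be nonnegative, would show that $b_t:=\psi(e_{tt})-e_{tt}$ is a nonnegative matrix with zero diagonal, i.e. $b_t\in\mathcal{T}_n$. I expect the crux to be the first paragraph, namely establishing the coordinatewise scalar identity for the diagonal of $\varphi$; once that is in hand, the monomial correction and the concluding $2\times2$ copositivity check are routine.
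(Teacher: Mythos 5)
Your proof is correct, and although it shares the paper's skeleton---extract the position $s(t)$ (the paper's $\pi(t)$) from Claim~\ref{cl8copos}, show it is a permutation, then repair everything by an explicit diagonal-scaling monomial congruence---the engine driving it is genuinely different. The paper establishes the three key facts by separate arguments: injectivity of $\pi$ holds because otherwise $\varphi(\mathcal{A}^{t_1}+\mathcal{A}^{t_2})$ would lie in a hyperplane while the span of $\mathcal{A}^{t_1}+\mathcal{A}^{t_2}$ is all of $\mathcal{S}_n$; the vanishing of the other diagonal entries of $\varphi(e_{tt})$ comes from a closure argument, namely $\varphi(e_{tt})\in\overline{\varphi(\mathcal{A}^{q})}$ for $q\neq t$; and the non-vanishing of the surviving coefficient $\alpha_t$ is deduced from the invariances $\varphi(\mathcal{T}_n)\subseteq\mathcal{T}_n$ and $\varphi^{-1}(\mathcal{T}_n)\subseteq\mathcal{T}_n$, the latter implicitly requiring the same analysis applied to $\varphi^{-1}$. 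Your single identity $(\varphi(a))_{s(t)s(t)}=\gamma_t\,a_{tt}$---obtained by noting that the codimension-one subspace $\varphi(\mathrm{span}\,\mathcal{A}^t)$ is contained in, hence equal to, the hyperplane $\{b:b_{s(t)s(t)}=0\}$, so the two functionals share a kernel and are proportional with $\gamma_t\neq0$---delivers all three facts by mere specialization: injectivity of $s$, the location of the diagonal of $\varphi(e_{jj})$, and positivity of $\gamma_j$ (non-vanishing is built into the proportionality). This is the more economical route: it unifies three separate arguments and entirely bypasses the $\mathcal{T}_n$-invariance step, which is the one place where the paper leans on the symmetry between $\varphi$ and $\varphi^{-1}$. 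What the paper's pointwise, cone-theoretic phrasing buys in exchange is mainly brevity of statement, not substance. Your closing steps---the explicit construction of the monomial matrix $m$, and the $2\times 2$ principal-submatrix check that a copositive matrix with a zero diagonal entry has nonnegative off-diagonal entries in that row, giving $b_t\in\mathcal{T}_n$---are exactly what the paper asserts but leaves to the reader, so they fill genuine gaps rather than diverge.
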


\begin{proof}
Let $\pi(t)$ be the index of the non-zero coordinate of a vector $u^t$ as in Claim~\ref{cl8copos}. If we had $\pi(s)=\pi(t)=j$ for some $s\neq t$, all the matrices in $\varphi\left(\mathcal{A}^s+\mathcal{A}^{t}\right)$ would have zeros at the $(j,j)$ position, which is a contradiction because the linear span of $\mathcal{A}^s+\mathcal{A}^{t}$ is full-dimensional. So we see that $\pi$ is an injective mapping, and as such it is a permutation of $\{1,\ldots,n\}$. For any $k\neq j$, we have $k=\pi(q)$ with $q\neq t$, so the $(k,k)$ entry of $\varphi(e_{tt})$ should be zero because $\varphi(e_{tt})$ belongs to the closure of $\varphi(\mathcal{A}^{q})$. 
So we have $\varphi(e_{tt})=\alpha_t e_{jj}+b_t$ with a real number $\alpha_t$ and a
trace-zero matrix $b_t$, and then $b_t\in\mathcal{T}_n$ because $\varphi(e_{tt})$ is copositive. Finally, since the set $\mathcal{T}_n$ lies in the closure of $\mathcal{A}^1\cap\ldots\cap\mathcal{A}^{n}$, we have $\varphi(\mathcal{T}_n)\subseteq\mathcal{T}_n$ and then $\varphi^{-1}(\mathcal{T}_n)\subseteq\mathcal{T}_n$. This shows that $\alpha_t\neq 0$ and gives a desired result up to a monomial congruence.
\end{proof}

\begin{claim}\label{cl10copos}
For $\psi$ as in Claim~\ref{cl9copos}, we have $\psi(x)=x$.
\end{claim}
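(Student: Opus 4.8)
The plan is to show that the normalized operator $\psi$, which inherits from $\varphi$ all the structure established above (it is again a linear bijection with $\psi(\mathcal{C}_n)=\mathcal{C}_n$, so Claims~\ref{cl1copos}--\ref{cl9copos} apply verbatim to it) and additionally fixes every diagonal unit up to a trace-zero perturbation, is forced to act as the identity on a basis of $\mathcal{S}_n$. Throughout I work with the basis of $\mathcal{S}_n$ formed by the diagonal units $e_{tt}$ and the symmetric off-diagonal units $E_{ij}:=e_{ij}+e_{ji}$.

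First I would determine the action of $\psi$ on the off-diagonal directions. Running the argument of Claim~\ref{cl9copos} for $\psi$ gives $\psi(\mathcal{T}_n)=\mathcal{T}_n$, so $\psi$ restricts to a linear automorphism of the span of $\mathcal{T}_n$, namely the subspace of zero-diagonal symmetric matrices, preserving the cone $\mathcal{T}_n$. That cone is a copy of the nonnegative orthant of $\mathbb{R}^{\binom n2}$, whose extreme rays are exactly the rays through the $E_{ij}$, and a cone-preserving linear bijection must permute extreme rays. Hence $\psi(E_{ij})=\lambda_{ij}E_{\sigma(ij)}$ for positive scalars $\lambda_{ij}$ and a permutation $\sigma$ of the two-element subsets of $\{1,\dots,n\}$.

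Next I would eliminate the perturbations $b_t$. The crucial point is that $e_{tt}$ spans an extreme ray of $\mathcal{C}_n$: if $e_{tt}=x+y$ with $x,y\in\mathcal{C}_n$, then all diagonal entries of $x,y$ vanish except possibly at position $t$, and since a copositive matrix with a zero diagonal entry has nonnegative entries in the corresponding row, the off-diagonal entries of $x$ and $y$ are forced to be zero; thus $x,y$ are diagonal multiples of $e_{tt}$. Because $\psi$ is a linear automorphism of $\mathcal{C}_n$, it carries extreme rays to extreme rays, so $\psi(e_{tt})=e_{tt}+b_t$ must itself be extreme. But $e_{tt}$ and $b_t$ both lie in $\mathcal{C}_n$ and, if $b_t\neq0$, are linearly independent (one is diagonal, the other has zero diagonal), exhibiting $e_{tt}+b_t$ as a sum of two non-parallel cone elements and contradicting extremality. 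Hence $b_t=0$ and $\psi(e_{tt})=e_{tt}$ for every $t$. With the diagonal now fixed, I would pin down $\sigma$ and the scalars using the rank-one copositive matrices $P_{ij}:=e_{ii}+e_{jj}-E_{ij}\in\partial\mathcal{C}_n$. Applying $\psi$ gives $\psi(P_{ij})=e_{ii}+e_{jj}-\lambda_{ij}E_{\sigma(ij)}$, which must remain copositive; if $\sigma(ij)=\{k,l\}$ contained an index outside $\{i,j\}$, the corresponding diagonal entry of $\psi(P_{ij})$ would vanish while its $(k,l)$-entry equals $-\lambda_{ij}<0$, so a nonnegative test vector supported on $\{k,l\}$ would drive the quadratic form to $-\infty$. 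Thus $\sigma(ij)=\{i,j\}$, and copositivity of the surviving $2\times2$ block $\left(\begin{smallmatrix}1&-\lambda_{ij}\\-\lambda_{ij}&1\end{smallmatrix}\right)$ forces $\lambda_{ij}\leqslant1$. Running the identical computation for $\psi^{-1}$, which fixes each $e_{tt}$ and sends $E_{ij}$ to $\lambda_{ij}^{-1}E_{ij}$, yields $\lambda_{ij}^{-1}\leqslant1$, whence $\lambda_{ij}=1$. Therefore $\psi$ fixes every basis matrix, and by linearity $\psi(x)=x$.

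I expect the main obstacle to be the step proving $b_t=0$, as it is the one place where a genuinely global feature of the copositive cone---the extremality of the rank-one rays $e_{tt}$---is needed, rather than the elementary $2\times2$ copositivity used elsewhere. The remaining care lies in justifying $\psi(\mathcal{T}_n)=\mathcal{T}_n$ and confirming that the $e_{tt}$ and $E_{ij}$ are indeed extreme, after which the determination of $\sigma$ and of the $\lambda_{ij}$ becomes routine.
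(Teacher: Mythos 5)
Your proposal is correct, and it reaches the conclusion by a genuinely different mechanism at the crucial step. To eliminate $b_t$, the paper works on the preimage side with a short algebraic trick: since $e_{tt}+0.5b_t$ is copositive, so is $\psi^{-1}(e_{tt}+0.5b_t)=e_{tt}-0.5\psi^{-1}(b_t)$, and since $\psi^{-1}(b_t)\in\mathcal{T}_n$ (from the proof of Claim~\ref{cl9copos}), this forces $\psi^{-1}(b_t)$ to be collinear with $e_{tt}$, whence $b_t$ is collinear with $\psi(e_{tt})=e_{tt}+b_t$ and must vanish. You instead work on the image side: you verify that $e_{tt}$ spans an extreme ray of $\mathcal{C}_n$, note that a linear automorphism of a cone permutes its extreme rays, and observe that $e_{tt}+b_t$ with $b_t\neq 0$ would be a sum of two non-proportional cone elements. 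Both arguments are sound; yours is more conceptual and uses the facial structure of $\mathcal{C}_n$ (a standard and generalizable tool for cone automorphisms), while the paper's is shorter and needs nothing beyond the claims already established. For the off-diagonal entries, your detour through the simplicial cone $\mathcal{T}_n$ --- using $\psi(\mathcal{T}_n)=\mathcal{T}_n$ and the fact that an automorphism of an orthant permutes its extreme rays to get $\psi(e_{ij}+e_{ji})=\lambda_{ij}E_{\sigma(ij)}$ before killing $\sigma$ --- is valid but dispensable: the paper obtains the same normal form directly, since $\beta_{ij}=\psi(e_{ij}+e_{ji})$ lies in $\mathcal{T}_n$ and copositivity of $\psi(e_{ii}+e_{jj}-e_{ij}-e_{ji})=e_{ii}+e_{jj}-\beta_{ij}$ already confines the support of $\beta_{ij}$ to the $\{i,j\}$ block. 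From there the two proofs coincide exactly: the $2\times2$ copositivity bound gives $\lambda_{ij}\leqslant 1$, and applying the same computation to $\psi^{-1}$ (legitimate, by Claim~\ref{cl1copos}) gives the reverse inequality, so $\psi$ fixes a basis of $\mathcal{S}_n$.
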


\begin{proof}
We get $$\psi^{-1}(e_{tt}+0.5b_t)=\psi^{-1}(e_{tt}+b_t)-0.5\psi^{-1}(b_t)=e_{tt}-0.5\psi^{-1}(b_t),$$ and since the resulting matrix is copositive, the preimage $\psi^{-1}(b_t)$ should be collinear to $e_{tt}$. Taking the $\psi$, we see that $b_t$ is collinear to $\psi(e_{tt})=e_{tt}+b_t$, so we have $b_t=0$. This result means that $\psi(e_{tt})=e_{tt}$.

Finally, we write $\beta_{ij}=\psi(e_{ij}+e_{ji})$ for $i\neq j$. As noted in the proof of the previous claim, $\beta_{ij}$ is trace-zero. The matrix $\psi(e_{ii}+e_{jj}-e_{ij}-e_{ji})=e_{ii}+e_{jj}-\beta_{ij}$ is copositive, so the non-zero entries of $\beta_{ij}$ lie in the $2\times 2$ submatrix with indexes $i,j$. This means that $\beta_{ij}=\alpha_{ij}(e_{ij}+e_{ji})$ with some $\alpha_{ij}\in\mathbb{R}$, and we should have $\alpha_{ij}\leqslant 1$ again because $e_{ii}+e_{jj}-\beta_{ij}$ is copositive. The opposite inequality $\alpha_{ij}\geqslant 1$ follows by symmetry.
\end{proof}

Claims~\ref{cl9copos} and~\ref{cl10copos} show that $\varphi$ is an identical mapping up to a monomial congruence, so the proof of Theorem~\ref{thrmcopos} is complete.


\end{document}